    \numberwithin{equation}{section}%
    \numberwithin{table}{section}%
    \numberwithin{figure}{section}
\newtheorem{lemma}{Lemma}[section]
\newtheorem{remark}{Remark}[section]
\newtheorem{proposition}{Proposition}[section]
\newtheorem{example}{Example}
\DeclareMathAlphabet{\mathmatrix}{OT1}{ptm}{b}{n}
\DeclareMathAlphabet{\mathvector}{OT1}{ptm}{bx}{it}
\def\d{\mathrm{d}}
\def\RR{\mathbb{R}}
\def\NN{\mathbb{N}}
\def \bx{\bm x}
\begin{document}

\title[]{ Preconditioned  Legendre spectral Galerkin methods for the non-separable elliptic equation }


\author{Xuhao Diao}
\address[Xuhao Diao]{ School of Mathematical Sciences, Peking University, Beijing 100871, China.}
\email{diaoxuhao@pku.edu.cn}

\author{Jun Hu}
\address[Jun Hu]{ School of Mathematical Sciences, Peking University, Beijing 100871, China.} \email{hujun@math.pku.edu.cn}

\author{Suna Ma}
\address[Suna Ma \Letter]{ School of Mathematical Sciences, Peking University, Beijing 100871, China.} \email{masuna@pku.edu.cn}

\keywords{  spectral  method, non-separable elliptic equation,  preconditioned conjugate gradient method, dense and ill-conditioned  matrix,  incomplete  LU factorization}

\subjclass[2001]{Primary 65N35, 65F10, 65N22}

\maketitle

\begin{abstract}
The Legendre  spectral  Galerkin method of self-adjoint second order elliptic equations usually results in a  linear system with a dense and ill-conditioned coefficient matrix. In this paper, the linear system is solved by a preconditioned conjugate gradient (PCG) method where the preconditioner $M$ is  constructed by approximating the variable coefficients with a ($T$+1)-term Legendre series in each direction to a desired accuracy. A feature of the proposed PCG method is that the iteration step increases slightly with the size of the resulting matrix when reaching  a certain approximation accuracy. The efficiency of the method lies in that the system with the preconditioner $M$ is approximately solved by a one-step iterative method based on the ILU(0) factorization. The ILU(0) factorization
of $M\in \RR^{(N-1)^d\times(N-1)^d}$ can be computed using $\mathcal{O}(T^{2d} N^d)$ operations, and the number of nonzeros in the  factorization factors is of $\mathcal{O}(T^{d} N^d)$, $d=1,2,3$.
To further speed up  the PCG method,  an algorithm is developed for fast
matrix-vector multiplications by the resulting  matrix of Legendre-Galerkin spectral discretization, without the need to explicitly form it. The complexity of the fast matrix-vector multiplications is of  $\mathcal{O}(N^d (\log N)^2)$.
As a result, the PCG method has a $\mathcal{O}(N^d (\log N)^2)$ total complexity for a $d$ dimensional domain with $(N-1)^d$ unknows, $d=1,2,3$.
Numerical examples are given to demonstrate the efficiency of proposed preconditioners and  the algorithm for fast matrix-vector multiplications.
\end{abstract}

\section{introduction}

Spectral methods are an important tool in engineer and scientific computing  for solving differential equations due to their high order of accuracy; see \cite{Canuto2006,ShenTang2006,GottliebOrszag1977,ShenTangWang2011} and the references therein. However, for problems with general variable coefficients,  spectral methods lead to a linear system with a dense and ill-conditioned matrix. Moreover, the dense matrix is usually not explicitly available, since it is costly to form it.
In practice, it becomes rather prohibitive to solve the linear system by a direct solver or even an iterative method without preconditioning for the multi-dimensional cases, when the size of the matrix is large.

Over the years there has been intensive research  on
the spectral collocation method for solving problems with variable coefficients, since it is easy to implement, once the differentiation matrices are precomputed. One significant attempt is to use a lower-order method (finite differences or finite elements \cite{CanutoQuarteroni1985, DevilleMund1985,KimParter1996, KimParter1997,FangShen2018}) or integration operator \cite{Coutsias1996,Hesthaven1998,WangSamson2014} as a preconditioner
and to take advantage of the fact that the matrix-vector multiplication from a Fourier- or Chebyshev-spectral discretization can be performed in a quasi-optimal complexity.
Another  approach is the finite element  multigrid preconditioning method proposed by
Shen et al.\cite{ShenWangXu2000} for the Chebyshev-collocation approximation of  second-order elliptic equations.  Although  many spectral collocation methods have been applied to numerically solve variable-coefficient differential equations, few efforts  are found for   spectral Galerkin methods, especially Legendre-Galerkin methods,  in literature. An early work is
the Chebyshev-Legendre Galerkin method for second-order elliptic problems  introduced in \cite{shen1996}, which is based on the Legendre-Galerkin formulation, and only the coefficients of Legendre expansions and the values at the Chebyshev-Gauss-Lobatto  points are used in the computation.
 A fast direct solver was presented  for the Legendre-Galerkin approximation of the two and three dimensional Helmholtz equations by Shen in \cite{Shen1994Legendre}, whose complexity is of $\mathcal{O}(N^{d+1})$  in a $d$ dimensional domain. An improved two-dimensional algorithm  was constructed by further exploring the matrix structures of the Legendre-Galerkin discretization in \cite{Shen1995}, whose complexity is of $\mathcal{O}(N^{2}\log_2 N)$, which was extended to the Legendre-Galerkin spectral approximation of the three dimensional Helmholtz equation in \cite{Auteri2000}.

The Legendre-Galerkin method of self-adjoint second order elliptic equations leads to symmetric  linear systems, but its efficiency is limited by the lack of fast transforms between the physical space (values at the Legendre Gauss points) and the spectral space (coefficients of the Legendre polynomials). In traditional spectral methods, a fast algorithm for  Legendre expansions is a procedure to fast  evaluate the Legendre expansion at Chebyshev points, and conversely, to fast  evaluate  the coefficients of the Legendre expansion from the table of its values at the Chebyshev-Gauss-Lobatto   points. Recently, a series of work were done for fast discrete Legendre transforms between the values at the Legendre Gauss points and the coefficients of the Legendre polynomials \cite{Tygert2010, potts2003, Keiner2009,HaleTownsend2015}.
In particular,  an $\mathcal{O}(N (\log N)^2/\log\log N)$ algorithm based on the FFT was proposed  in \cite{HaleTownsend2015} in one dimension for computing the  discrete Legendre transform with a degree $N-1$ Legendre expansion  at $N$ Legendre points.

The goal of this article is to fast solve the linear system  resulting from the Legendre-Galerkin spectral discretization of second order elliptic equations with variable coefficients by the preconditioned conjugate gradient (PCG) method. The  novelties of the paper lie in the following three folds:
\begin{itemize}
\item
Firstly, the preconditioner is constructed by using a truncated Legendre series to approximate the variable coefficients.
 It is in the  case that  the iterative step of the PCG method only increase slightly with  the size of the resulting matrix.
 A closely related preconditioner of  \cite{shen1996, shen1997} is to use a constant-coefficient problem to  precondition variable-coefficient problems. However, for coefficients with large variations,  iterative methods with that preconditioner usually converge very slowly \cite{ShenWangXu2000}.
\item
Secondly, by means of fast discrete Legendre transforms,  an algorithm is developed for fast matrix-vector multiplications by the resulting matrix  without the need to explicitly form it. As a result, they can be done in $\mathcal{O}(N^d (\log N)^2)$ operations.
\item
Last but not least, the system with the preconditioner as the coefficient matrix is approximately solved by a one-step iterative method based on the ILU(0) factorization. Thanks to  the sparse structure of the preconditioner $M$, the ILU(0) factorization gives an  unit lower triangular matrix $L$ and an upper triangular matrix $U$, where together the $L$ and $U$ matrices have the same number of nonzero elements as the matrix $M$. The complexity essentially depends on the number of nonzeros in $M$, which is of $\mathcal{O}(T^{2d} N^d)$ with $T$ the cutoff number of the Legendre series in each direction  used to approximate the coefficient functions.

\end{itemize}

The remainder of this article is organized as follows.  Some preliminaries are  given in Section 2. Section 3 introduces the Legendre-Galerkin method  of   second-order  elliptic equations with  non-separable coefficients. In Section 4, the  preconditioned
conjugate gradient method with implementation issues   is described.
 In section 5,  some numerical experiments are  presented to illustrate the efficiency of both the algorithm for fast matrix-vector multiplications and  the proposed preconditioner. The conclusion is in the last section.

\section{ Preliminaries}
In this section, some properties of  Legendre polynomials and a useful transform are presented.
\subsection{Legendre polynomials}
Denote by $L_n(x)$ the  Legendre polynomial of degree $n$ which satisfies the following three-term recurrence relation:
       \begin{align*}
         L_0(x)&=1, \quad  L_1(x)=x,\\
        (n+1)L_{n+1}(x)&=(2n+1)x L_n(x)-nL_{n-1}(x),  \quad n\ge 1.
       \end{align*}
The  Legendre polynomials are orthogonal to each other with respect to the uniform weight function,
 \begin{align*}
  \int_{-1}^1 L_m(x) L_n(x)\d x =\frac{2}{2m+1}\delta_{mn}, \quad m,n\ge 0,
 \end{align*}
where $\delta_{mn}$ is the Kronecker delta symbol.
Moreover, they satisfy the derivative recurrence relation
 \begin{align}\label{difRecerrence}
 (2n+1)L_n(x)= L'_{n+1}(x)-L'_{n-1}(x), \quad  n\ge 1,
 \end{align}
 and symmetric property
 \begin{align}\label{symproperty}
  L_n(-x)=(-1)^n L_n(x),\quad L_n(\pm 1)=(\pm 1)^n.
 \end{align}

 \begin{lemma}[\cite{Carlitz1961}]
  For $m,n\ge 0,$  it holds that
  \begin{align}\label{LegProduct}
   L_m(x)L_n(x) = \sum_{s=0}^{\min(m,n)}\frac{m+n+\frac{1}{2}-2s}{m+n+\frac{1}{2}-s}\frac{C_s C_{m-s}C_{n-s}}{C_{m+n-s}}
   L_{m+n-2s}(x),
  \end{align}
  where
    $$ C_r =\frac{1\cdot3 \ldots (2r-3)(2r-1)}{r!2^r}.$$
 \end{lemma}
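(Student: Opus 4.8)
The plan is to read off the linearization coefficients directly from orthogonality. First I would observe that $L_m L_n$ is a polynomial of degree $m+n$ with definite parity $(-1)^{m+n}$ under $x\mapsto -x$, by \eqref{symproperty}; hence its Legendre expansion contains only $L_k$ with $k\equiv m+n\pmod 2$ and $k\le m+n$. Moreover $\int_{-1}^1 L_m L_n L_k\,\d x=0$ whenever $k<|m-n|$: if, say, $n>m$ and $k<n-m$, then $L_m L_k$ has degree $m+k<n$ and is therefore orthogonal to $L_n$. Writing $k=m+n-2s$, the surviving range is exactly $s=0,1,\dots,\min(m,n)$, which already fixes the index set on the right of \eqref{LegProduct}. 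Using the orthogonality relation $\int_{-1}^1 L_k^2\,\d x=2/(2k+1)$, the coefficient of $L_{m+n-2s}$ is
\[ a_s=\frac{2(m+n-2s)+1}{2}\int_{-1}^1 L_m(x)L_n(x)L_{m+n-2s}(x)\,\d x, \]
so the whole problem reduces to the triple (Gaunt--Adams) integral.

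The decisive ingredient is the closed form of that integral: for $a+b+c=2g$ even with $|a-b|\le c\le a+b$,
\[ \int_{-1}^1 L_a(x)L_b(x)L_c(x)\,\d x=\frac{2}{2g+1}\,\frac{C_{g-a}C_{g-b}C_{g-c}}{C_g}. \]
Specializing to $a=m$, $b=n$, $c=m+n-2s$ gives $g=m+n-s$, hence $g-a=n-s$, $g-b=m-s$, $g-c=s$ and $C_g=C_{m+n-s}$. Substituting into $a_s$ and using $2(m+n-2s)+1=2\!\left(m+n+\tfrac12-2s\right)$ together with $2g+1=2\!\left(m+n+\tfrac12-s\right)$, the scalar prefactors collapse to $\frac{m+n+\frac12-2s}{m+n+\frac12-s}$ while the factorial factors assemble into $\frac{C_s C_{m-s}C_{n-s}}{C_{m+n-s}}$, which is exactly \eqref{LegProduct}. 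To be safe I would pin down the overall constant in the Gaunt formula on a small case, e.g.\ verifying $\int_{-1}^1 L_2(x)^3\,\d x=\tfrac{4}{35}$, before trusting the simplification.

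The hard part will be establishing the Gaunt--Adams formula itself; everything else is bookkeeping. I would prove it by generating functions: multiplying two copies of $(1-2xr+r^2)^{-1/2}=\sum_a L_a(x)r^a$ and integrating against a single $L_c$ turns the task into evaluating
\[ \int_{-1}^1 L_c(x)\,\big[(1-2xr+r^2)(1-2xs+s^2)\big]^{-1/2}\,\d x \]
and extracting the coefficient of $r^a s^b$; the quantities $C_r$ then emerge from the binomial expansions of the two square roots after a Vandermonde/Dougall-type summation. An alternative is to insert the explicit monomial expansion of $L_n$ and reduce to one terminating $_3F_2$ evaluation, or simply to quote the Wigner $3j$-symbol value $\int_{-1}^1 L_aL_bL_c\,\d x=2\left(\begin{smallmatrix}a & b & c\\ 0 & 0 & 0\end{smallmatrix}\right)^2$ from angular-momentum theory. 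A third, more elementary but heavier route avoids the integral entirely: fix $n$ and induct on $m$, using the recurrence $(2m+1)xL_m=(m+1)L_{m+1}+mL_{m-1}$ from the excerpt to convert \eqref{LegProduct} into a three-term recurrence among the ratios $\frac{C_s C_{m-s}C_{n-s}}{C_{m+n-s}}$; verifying that ratio identity is routine algebra, but it is precisely where the computational weight of the proof would lie.
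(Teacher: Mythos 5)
Your reduction is sound, but note that the paper does not prove this lemma at all: it is imported verbatim from Carlitz's 1961 paper (which in turn treats the more general ultraspherical case), so there is no internal argument to compare against. What you propose is the classical self-contained derivation: parity plus the degree bound $\deg(L_mL_k)=m+k<n$ correctly confines the expansion to $k=m+n-2s$ with $0\le s\le\min(m,n)$, and your bookkeeping from the Gaunt--Adams integral
\[
\int_{-1}^1 L_aL_bL_c\,\d x=\frac{2}{2g+1}\,\frac{C_{g-a}C_{g-b}C_{g-c}}{C_g},\qquad 2g=a+b+c,
\]
to the stated coefficient $\frac{m+n+\frac12-2s}{m+n+\frac12-s}\frac{C_sC_{m-s}C_{n-s}}{C_{m+n-s}}$ checks out exactly (including the sanity check $\int_{-1}^1L_2^3\,\d x=\frac{4}{35}$). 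The honest gap you yourself flag is real: the entire content of the lemma is the Gaunt--Adams evaluation, and you defer it to one of three routes (generating functions, a terminating ${}_3F_2$, or the $3j$-symbol identity) without executing any of them; as written this is a correct reduction to a known classical result rather than a complete proof. Since the paper likewise leans on a citation, that is an acceptable level of rigor here, but if a self-contained argument is wanted, the induction on $m$ via the three-term recurrence is the most elementary way to close the loop, at the cost of the ratio identity you describe.
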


 \subsection{ Discrete Legendre transforms}
Given $N+1$ values $c_0,c_1,\cdots, c_{N}$, the backward discrete Legendre transform (BDLT) calculates the discrete sums:
\begin{align}\label{BDLT}
  f_k = \sum_{n=0}^N c_n L_n(x_k),  \qquad  0\le k\le N,
\end{align}
where  the Legendre-Gauss quadrature nodes $x_0, x_1, \cdots, x_N$ are the roots of $L_{N+1}(x)$.
Given $f_0,f_1,\cdots, f_{N}$, the forward discrete Legendre transform (FDLT) computes $c_0,c_1,\cdots, c_{N}$, which reads
\begin{align}\label{FDLT}
 c_n = \frac{2n+1}{2}\sum_{k=0}^{N}w_k f_k L_n(x_k),  \qquad 0\le n\le N,
\end{align}
where $w_0, w_1, \cdots, w_N$ are the Legendre-Gauss quadrature  weights.
Assuming that $(L_n(x_j))_{j,n=0,\cdots,N}$ have been precomputed, the discrete Legendre transforms \eqref{BDLT}
and \eqref{FDLT} can be carried out by a standard matrix-vector multiplication routine in about $N^2$ flops.
In this paper,  the algorithm in \cite{HaleTownsend2015} is used for the fast computation of the discrete Legendre transforms \eqref{BDLT} and \eqref{FDLT} which is of $\mathcal{O}(N (\log N)^2/\log \log N)$ complexity, because it has no precomputational cost and only involves the FFT and Taylor approximations.

\section{The Legendre-Galerkin method of non-separable second order elliptic equations}

Consider non-separable second order elliptic equations of the form
\begin{align}\label{NSproblem}
\begin{cases}
 -\text{div}( \beta(\bx)\nabla u ) + \alpha(\bx)u = f, \quad\bx\in\Omega=(-1,1)^d,\\
 u|_{\partial \Omega}=0,
\end{cases}
\end{align}
where $d=1,2,3$, the  coefficient functions $\beta(\bx), \alpha(\bx)$  and $f(\bx)$ are continuous, and
 $0<b_1\le \beta(\bx)\le b_2$, $0\le \alpha(\bx)< a$ in $\Omega$ for some positive constants $b_1,b_2,a.$

Let $P_N$ be the space of polynomials of  degree less than or equal to $N$, and
\begin{align*}
 X_N=\{ v\in P_N: v(\pm 1)=0  \}.
\end{align*}
Denote $X^d_N = (X_N)^d$.
Then  the  Legendre-Galerkin approximation to \eqref{NSproblem} is: Find $u_N\in X^d_N$ such that
\begin{align}\label{Discreteproblem}
  (\beta(\bx)\nabla u_N, \nabla v_N )+(\alpha(\bx)u_N, v_N) = (f,v_N), \quad \forall v_N\in X^d_N.
\end{align}
where  $(u,v)=\int_{\Omega}uv \d \bx$ is  the scalar product in  $L^2(\Omega)$.

Denote
\begin{align*}
\phi_k(x):= L_k(x) - L_{k+2}(x).
\end{align*}
Due to \eqref{symproperty}, it is easy to know that the function
$\phi_k(x)$  satisfies the boundary condition of problem \eqref{NSproblem}. Hence,  the
basis functions of the space $X_N$ can be chosen as
$$
\phi_0(x), \phi_1(x), ..., \phi_{N-2}(x).
$$

\begin{itemize}
  \item {\bf{One dimensional case.}}
    Assume
$ u_N= \sum_{n=0}^{N-2} \widehat{u}_n \phi_n(x),$
and denote
\begin{align*}
        &A = \Big[\big(\beta(x)\phi'_k, \phi'_j \big)\Big]_{0\le k,j \le N-2}, \\
        & B = \Big[\big(\alpha(x)\phi_k, \phi_j \big)\Big]_{0\le k,j \le N-2}, \\
        &\widehat{u} = \big( \widehat{u}_0, \widehat{u}_1, \cdots, \widehat{u}_{N-2}  \big)^T,\\
        & F = \Big(  f_0,  f_1, \cdots,  f_{N-2} \Big)^T, \quad   f_k = (f, \phi_k).
\end{align*}
  \item {\bf{Two dimensional case.}}
The multi-dimensional  basis functions are constructed by  using the tensor product of one-dimensional
 basis functions. In two dimensions, they read
 $$ \varphi_{k,j}(\bx): = \phi_k(x)\phi_j(y),\quad k,j = 0,1,\cdots, N-2.$$
Assume
$ u_N= \sum_{k,j=0}^{N-2} \widehat{u}_{k,j} \varphi_{k,j}(\bx),$
and  denote
       \begin{align*}
        &A = \Big[\big(\beta(\bx)\nabla\varphi_{k,j}, \nabla\varphi_{m,n}\big)\Big]_{0\le k,j,m,n \le N-2},\\
        &B = \Big[ \big(\alpha(\bx)\varphi_{k,j}, \varphi_{m,n} \big) \Big]_{0\le k,j,m,n \le N-2},\\
        &\widehat{u} = \Big( \widehat{u}_{0,0}, \widehat{u}_{1,0}, \ldots, \widehat{u}_{N\!-\!2,0};\ldots; \widehat{u}_{0,N\!-\!2}, \widehat{u}_{1,N-2}, \ldots, \widehat{u}_{N-2,N-2}  \Big)^T,\\
        & F = \Big(\, f_{0,0}, f_{1,0}, \cdots, f_{N-2,0};\ldots; f_{0,N-2}, f_{1,N-2}, \ldots, f_{N-2,N-2} \Big)^T, \\
        &f_{k,j} = (f, \varphi_{k,j}).
       \end{align*}
  \item {\bf{Three dimensional case.}} Similarly, the three-dimensional basis functions are as follows
  $$
     \psi_{i,j,k}(\bx): = \phi_i(x)\phi_j(y)\phi_k(z),\quad i,k,j = 0,1,\cdots, N-2.
  $$
    Assume $u_N= \sum_{i,j,k=0}^{N-2} \widehat{u}_{i,j,k} \psi_{i,j,k}(\bx)$, and denote
   \begin{align*}
        &A = \Big[\big(\beta(\bx)\nabla\psi_{i,j,k}, \nabla\psi_{m,n,l}\big)\Big]_{0\le i,k,j,m,n,l \le N-2},\\
        &B = \Big[ \big(\alpha(\bx)\psi_{i,j,k}, \psi_{m,n,l} \big) \Big]_{0\le i,k,j,m,n,l \le N-2},\\
        &\widehat{u} = \big( \widehat{u}_{0}, \widehat{u}_{1},\ldots, \widehat{u}_{N-2}\big)^T,\\
        & \widehat{u}_{k} = \big(\widehat{u}_{0,0,k}, \widehat{u}_{1,0,k}, \ldots, \widehat{u}_{N\!-\!2,0,k};\ldots; \widehat{u}_{0,N\!-\!2,k}, \widehat{u}_{1,N -2,k}, \ldots, \widehat{u}_{N -2,N-2,k}\big),\\
        & F = \big(\, f_{0}, f_{1}, \ldots, f_{N-2} \big)^T,\\
        & f_k = \big( f_{0,0,k}, f_{1,0,k},\, \ldots, \,f_{N-2,0,k};\ldots; f_{0,N-2,k}, f_{1,N-2,k}, \ldots, f_{N-2,N-2,k}\big),\\
        & f_{i,j,k} = (f, \psi_{i,j,k}).
       \end{align*}
\end{itemize}

Then the equation \eqref{Discreteproblem} is equivalent to the following algebraic system:
\begin{align}\label{algebraicsystem}
  ( A + B )\widehat{u} = F.
\end{align}
For variable coefficients $\alpha(\bx)$ and $\beta(\bx)$, the  matrices $A$ and $B$ in equation \eqref{algebraicsystem} are usually dense and ill-conditioned. Hence, it is prohibitive to use a direct inversion method or an iterative method without
preconditioning. Moreover, it is imperative to use an iterative method with a good preconditioner.

\section{ Preconditioned conjugate gradient method }

In this article, the symmetric definite linear system \eqref{algebraicsystem} is solved by the
preconditioned conjugate gradient (PCG) method presented in Algorithm 1.
\begin{algorithm}[h]
\caption{ PCG}
\textbf{Initialize:} $A,B,F$ and initialization vector $x$, preconditioner $M$,
the maximum  loop size $k_{max}$, stop  criteria $\varepsilon$\\
$k=0$, \\
$r = F-(A+B)x$\\
\textbf{While} $\sqrt{r^T r}> \varepsilon \|F\|_2$ \textbf{and} $k<k_{max}$ \textbf{do}\\
\hspace{0.4cm}Solve $Mz = r$,\\
\hspace{0.4cm} $ k=k+1$\\
 \hspace{0.4cm} \textbf{if} $k=1$\\
 \hspace{0.6cm}  $p=z$; $\rho=r^T z$\\
 \hspace{0.4cm}  \textbf{else}\\
 \hspace{0.6cm}   $\tilde{\rho}=\rho$; $\rho=r^T z$; $\beta = \rho/\tilde{\rho};$ $p=z+\beta p$\\
 \hspace{0.4cm}   \textbf{end if} \\
 \hspace{0.4cm}   $w=(A+B)p$; $\alpha=\rho/p^T w$\\
 \hspace{0.4cm}  $x = x+\alpha p;$ $r=r-\alpha w$\\
 \textbf{end while}
\end{algorithm}


 In each iteration of the PCG method, it needs to solve a system with $M$ as the
coefficient matrix and involves a matrix-vector multiplication. The complexity of these two steps dominates
that of the algorithm.
In order to accelerate the convergence rate of conjugate gradient type methods, an efficient preconditioner $M$ is needed.
 Moreover, an accurate numerical solver is needed to solve the preconditioning equation $Mz = r$.
 At last, fast matrix vector multiplications have to be used to further reduce the complexity of the algorithm.

\subsection{Proposed preconditioner}
A preconditioner is  prescribed for the coefficient matrix $A+B$ of the linear system \eqref{algebraicsystem} in the following way.

Since $\beta(\bx)$ and $\alpha(\bx)$ are continuous, they can be approximated by a finite number of Legendre polynomials to any desired accuracy. That is, for any $\epsilon_1 >0, \epsilon_2 >0,$ there  exists  $t_1, t_2\in \NN $ and
$ p_{t_1}\in (P_{t_1})^d$, $ p_{t_2}\in (P_{t_2})^d$
such that
\begin{align}
 & \| \beta(\bx) - p_{t_1} \|_{L^{\infty}([-1,1]^d)} < \epsilon_1, \label{Appbeta}\\
 & \| \alpha(\bx) - p_{t_2} \|_{L^{\infty}([-1,1]^d)} < \epsilon_2   \label{Appalpha}.
\end{align}
It is stressed that $t_1$ and $t_2$ can be surprisingly small when $\beta(\bx), \alpha(\bx)$ are  analytic or many times differentiable. For practical purposes,  the preconditioner $M$ is constructed by  replacing $\beta(\bx)$ with $p_{t_1}$   in the matrix $A$
and  $\alpha(\bx)$ with $p_{t_2}$ in the matrix $B$. More precisely,
\begin{itemize}
\item $d=1$
       \begin{align}
         & p_{t_1}(x)= \sum_{t=0}^{t_1} \widehat{\beta}_t L_t(x), \label{1DMbeta} \\
         & p_{t_2}(x)= \sum_{t=0}^{t_2} \widehat{\alpha}_t L_t(x), \label{1DMalpha} \\
         & M = \Big[\big(p_{t_1}(x)\phi'_i(x), \phi'_j(x) \big) + \big(p_{t_2}(x)\phi_i(x), \phi_j(x) \big) \Big]_{0\le i,j \le N-2} \label{1DM}.
        \end{align}

\item $d=2$
        \begin{align}
          & p_{t_1}(\bx)= \sum_{m,n=0}^{t_1} \widehat{\beta}_{mn} L_m(x)L_n(y), \\
          & p_{t_2}(\bx)= \sum_{m,n=0}^{t_2} \widehat{\alpha}_{mn} L_m(x)L_n(y),\\
          & M = \Big[\big(p_{t_1}(\bx)\nabla\varphi_{i,j}, \nabla\varphi_{m,n} \big)+ \big(p_{t_2}(\bx)\varphi_{i,j}, \varphi_{m,n} \big)
              \Big]_{0\le i,j,m,n \le N-2} \label{2DM}.
        \end{align}

\item $d=3$
        \begin{align}
           & p_{t_1}(\bx)= \sum_{m,n,l=0}^{t_1} \widehat{\beta}_{mnl} L_m(x)L_n(y)L_l(z),\\
           & p_{t_2}(\bx)= \sum_{m,n,l=0}^{t_2} \widehat{\alpha}_{mnl} L_m(x)L_n(y)L_l(z),\\
           &M = \Big[\big(p_{t_1}(\bx)\nabla\psi_{i,j,k}, \nabla\psi_{m,n,l} \big)+ \big(p_{t_2}(\bx)\psi_{i,j,k}, \psi_{m,n,l} \big)
              \Big]_{0\le i,j,k,m,n,l \le N-2}\label{3DM}.
        \end{align}

\end{itemize}

Note that  the  Legendre expansion coefficients in $p_{t_1}(\bx)$ and $ p_{t_2}(\bx)$  can be calculated respectively in  $\mathcal{O}(t_{1}^d (\log t_{1})^2)$ operations and $\mathcal{O}(t_{2}^d (\log t_{2})^2)$ operations  by means of the fast discrete Legendre transform.
Moreover, the  preconditioner $M$  for one-dimensional case is a  banded matrix with a fixed bandwidth dependent of $t_1$ and $t_2$ from the following proposition.

\begin{proposition}\label{Matrixproposition}
 Denote
 \begin{align*}
  M_1 &= \big[\beta_{ji} \big]_{0\le i,j \le N-2},\quad \beta_{ji}=\big(p_{t_1}(x)\phi'_i(x), \phi'_j(x) \big),\\
  M_2 &= \big[\alpha_{ji} \big]_{0\le i,j \le N-2},\quad \alpha_{ji}=\big(p_{t_2}(x)\phi_i(x), \phi_j(x) \big).
 \end{align*}
 If $t_1$  in \eqref{1DMbeta}  and $t_2$ in \eqref{1DMalpha} are fixed, the bandwidth $q_1$ of banded matrix $M_1$  and the bandwidth $q_2$ of banded matrix $M_2$
 are as follows:
 \begin{align*}
 & q_1=
  \begin{cases}
   2t_1 -1, & t_1 \, \text{even},\\
   2t_1 +1, & t_1 \, \text{odd},
  \end{cases}
  \qquad \beta(x) \text{ is\, an\, odd\, function;}\\
 &  q_1=
  \begin{cases}
   2t_1 +1, & t_1 \, \text{even},\\
   2t_1 -1, & t_1 \, \text{odd},
  \end{cases}
  \qquad \beta(x) \text{ is\, an\, even\, function;}\\
 &  q_2=
  \begin{cases}
   2t_2 +3, & t_1 \, \text{even},\\
   2t_2 +5, & t_1 \, \text{odd},
  \end{cases}
  \qquad \alpha(x) \text{ is\, an\, odd\, function;}\\
 &  q_2=
  \begin{cases}
   2t_2 +5, & t_2 \, \text{even},\\
   2t_2 +3, & t_2 \, \text{odd},
  \end{cases}
  \qquad \alpha(x) \text{ is\, an\,even\, function.}
 \end{align*}

\end{proposition}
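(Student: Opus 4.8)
The plan is to reduce every entry of $M_1$ and $M_2$ to integrals of triple products of Legendre polynomials, and then read off the sparsity pattern from the nonvanishing criterion for such products. The starting observation for $M_1$ is that the derivative recurrence \eqref{difRecerrence} collapses $\phi_k'$ to a single Legendre polynomial: taking $n=k+1$ gives $L_{k+2}'-L_k'=(2k+3)L_{k+1}$, hence $\phi_k'=L_k'-L_{k+2}'=-(2k+3)L_{k+1}$. Substituting this into $\beta_{ji}$ yields $\beta_{ji}=(2i+3)(2j+3)\sum_{t=0}^{t_1}\widehat{\beta}_t\int_{-1}^1 L_t L_{i+1}L_{j+1}\,\d x$, a single triple product up to a nonzero scalar. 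For $M_2$ I would instead expand $\phi_i=L_i-L_{i+2}$ and $\phi_j=L_j-L_{j+2}$ directly, so that $\alpha_{ji}$ becomes a signed sum of four triple products $\int_{-1}^1 L_t L_a L_b\,\d x$ with $a\in\{i,i+2\}$, $b\in\{j,j+2\}$, weighted by $\widehat{\alpha}_t$, $0\le t\le t_2$.

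The second ingredient is the nonvanishing rule for $\int_{-1}^1 L_t L_a L_b\,\d x$. Applying the product formula \eqref{LegProduct} to $L_aL_b$ and then using orthogonality shows this integral equals a \emph{positive} coefficient times $\tfrac{2}{2t+1}$ exactly when $L_t$ occurs in the expansion of $L_aL_b$, i.e. when $t\equiv a+b \pmod 2$ and $|a-b|\le t\le a+b$; in particular a single triple product never vanishes by cancellation. I would also record that $p_{t_1}$ and $p_{t_2}$ inherit the parity of $\beta$ and $\alpha$: by the symmetry \eqref{symproperty}, an even (resp. odd) coefficient function has vanishing odd-indexed (resp. even-indexed) Legendre coefficients, so the admissible indices $t$ above run only over the even, or only over the odd, integers in $[0,t_1]$ (resp. $[0,t_2]$).

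With these two facts the bandwidths follow by tracking the largest admissible $|i-j|$. For $M_1$, the triple product $\int L_t L_{i+1}L_{j+1}$ can be nonzero for some admissible $t\le t_1$ exactly when $|i-j|\le t_1$ and $|i-j|\equiv$ (parity of $\beta$), since the smallest $t$ compatible with the triangle condition is $|i-j|$, whose parity equals that of $(i+1)+(j+1)=i+j$. Thus the half-bandwidth $h_1$ is the largest integer $\le t_1$ of the prescribed parity, namely $t_1$ or $t_1-1$ according to the parity cases, and the reported bandwidth is the full band span $q_1=2h_1+1$, giving $2t_1\pm1$ as claimed. For $M_2$, among the four products the most permissive for $j>i$ is $\int L_t L_{i+2}L_j$, whose triangle condition reads $|i-j|-2\le t$; hence some admissible $t\le t_2$ exists iff $|i-j|\le t_2+2$, with the parity constraint $|i-j|\equiv$ (parity of $\alpha$). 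The half-bandwidth $h_2$ is then the largest integer $\le t_2+2$ of the prescribed parity, i.e. $t_2+2$ or $t_2+1$, and $q_2=2h_2+1$ reproduces the four stated values.

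The step I expect to require the most care is the $M_2$ case: one must verify that the outermost admissible diagonal is genuinely populated and not annihilated by cancellation among the four triple products. This is settled by observing that at the extreme $|i-j|=h_2$ only the single term $\int L_t L_{i+2}L_j$ satisfies the triangle condition for $t\le t_2$ (the other three force a strictly larger lower bound on $t$), so the entry is a nonzero multiple of $\widehat{\alpha}_t$ with $t=|i-j|-2$ and does not vanish; the analogous remark confirms that the diagonal $|i-j|=h_1$ of $M_1$ is nonzero. A minor point worth flagging is that the proposition is phrased for $\beta,\alpha$ "odd" or "even", which here should be read purely as a parity condition on the Legendre coefficients of $p_{t_1},p_{t_2}$; and I would note, correcting an evident typo, that in the odd-$\alpha$ case the two subcases are governed by the parity of $t_2$, not $t_1$.
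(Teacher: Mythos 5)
Your argument is correct and follows essentially the same route as the paper's proof: reduce $\phi_k'$ to $-(2k+3)L_{k+1}$ via \eqref{difRecerrence}, expand the products through the linearization formula \eqref{LegProduct}, and combine the resulting index ranges with orthogonality and the parity of the Legendre coefficients of $\beta$ and $\alpha$. Your version is simply more explicit than the paper's terse sketch --- in particular the verification that the outermost diagonal of $M_2$ is not killed by cancellation among the four triple products, and the observation that the ``$t_1$'' in the odd-$\alpha$ case of the statement is a typo for ``$t_2$'', are both worthwhile additions rather than deviations.
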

\begin{proof}
 In the case $d$=1, it follows from \eqref{LegProduct} that  both $p_{t_1}(x)\phi'_k(x)$ and $p_{t_2}(x)\phi_k(x)$ can be  represented in Legendre series, i.e.,
 \begin{align*}
   &p_{t_1}(x)\phi'_k(x)=(-2k-3)L_{k+1}(x)\sum_{t=0}^{t_1} \widehat{\beta}_t L_t(x)=
   \begin{cases}
     \sum_{j=0}^{t_1 + k+1}\tilde{\beta}_j L_j,       &t_1 \ge k+1,\\
     \sum_{j=k+1-t_1}^{t_1 + k+1}\tilde{\beta}_j L_j, &t_1 < k+1,
   \end{cases}\\
  & p_{t_2}(x)\phi_k(x)=(L_{k}(x)-L_{k+2}(x))\sum_{t=0}^{t_2} \widehat{\alpha}_t L_t(x)=
   \begin{cases}
     \sum_{j=0}^{t_2 + k +2}\tilde{\alpha}_j L_j,       &t_2 \ge k,\\
     \sum_{j=k-t_2}^{t_2 + k+2}\tilde{\alpha}_j L_j,    &t_2 < k,
   \end{cases}
 \end{align*}
where $\tilde{\beta}_j$  are  Legendre expansion coefficients in terms of  $C_j$ in \eqref{LegProduct} and $\widehat{\beta}_j$, $\tilde{\alpha}_j$ are  Legendre expansion coefficients in terms of  $C_j$ in \eqref{LegProduct} and $\widehat{\alpha}_j$.
Together with  parity arguments on $\beta(x)$ and $\alpha(x)$, this leads to the conclusion.
\end{proof}

\begin{remark}
The bandwidth of  preconditioner $M$ is of $\mathcal{O}(N)$  in the case $d=2$ and of $\mathcal{O}(N^2)$ in the case $d=3$.
\end{remark}

\subsection{ Incomplete LU preconditioning for banded linear systems}

Without loss of generality, assume $\alpha(\bx)=0$ in problem \eqref{NSproblem}.
And the preconditioner $M$ is constructed by using a ($T$+1)-term Legendre series in each direction to approximate the coefficient function $\beta(\bx)$.
In what follows, it is  shown that the preconditioning equation $Mz=r$ is approximately solved
by a one-step iterative process based on the ILU(0) factorization, see for instance, \cite[Chapter 10]{saad2000},  in $\mathcal{O}(T^{2d} N^d)$ operations for $d=1,2,3$.

To approximately solve $Mz=r$, proceed as follows:
\begin{enumerate}[Step 1.]
 \item Perform the ILU(0) factorization to obtain a sparse unit lower triangular matrix $L$ and a sparse upper triangular matrix $U$;
 \item Solve the unit lower triangular system $Ly=r$ by a forward substitution shown in Algorithm 3;
 \item Solve the upper triangular system $Uz=y$ by a backward substitution shown in Algorithm 4.

\end{enumerate}

For the sparse matrix $M$ whose elements are $m_{ij},i,j = 1,\ldots,(N-1)^d$,
the incomplete LU factorization process with no fill-in, denoted by ILU(0), is to compute a sparse unit lower triangular matrix $L$ and a sparse upper triangular matrix $U$ so that
the elements of $M-LU$ are zeros in the locations of $NZ(M)$, where  $NZ(M)$ is the set of pairs $(i,j), 1\le i,j\le (N-1)^d$ such that $m_{ij}\neq 0,$ and the entries in
the extra diagonals in the product $LU$ are called fill-in elements. Due to the fact that
fill-in elements are ignored,  it is possible to find $L$ and $U$ so that their product is equal to $M$ in the other diagonals.
By definition, together the $L$ and $U$ matrices in ILU(0) have the same number of nonzero elements as the matrix $M$.

\begin{algorithm}[h]
\caption{ILU(0)}
\textbf{Initialize:}  Given $H\in \RR^{n\times n}$, the following algorithm computes an unit lower triangular matrix $L$ and an upper triangular matrix $U$, assuming they exist. $H(i,j)$ is overwritten by $L(i,j)$ if $i>j$ and by $U(i,j)$ otherwise.\\
\textbf{for} $i = 2:n$\\
\hspace{0.5cm} \textbf{for} $k = 1:i-1$ and $(i,k)\in NZ(H)$\\
\hspace{1cm}   $ H(i,k) = H(i,k)/H(k,k)$\\
\hspace{1cm} \textbf{for} $j = k+1:n$ and $(i,j)\in NZ(H)$\\
 \hspace{1.6cm}   $H(i,j) = H(i,j)-H(i,k)\cdot H(k,j)$\\
 \hspace{1cm}  \textbf{end for} \\
 \hspace{0.5cm}  \textbf{end for} \\
 \textbf{end for}
\end{algorithm}

\begin{algorithm}[h]
\caption{ Forward substitution}
\textbf{Initialize:}  Given an unit lower triangular matrix $L\in \RR^{n\times n}$ and a vector $r\in \RR^{n}$, the following algorithm computes the linear system $Ly = r$.\\
$y(1) = r(1)$\\
\textbf{for} $i= 2:n$\\
\hspace{0.5cm} \textbf{for} $j = 1:i-1$ and $(i,j)\in NZ(L)$\\
\hspace{1cm}    $y(i) = y(i) + L(i,j)\cdot y(j)$\\
\hspace{0.5cm}  \textbf{end for} \\
\hspace{0.5cm}  $y(i) = r(i)-y(i)$\\
 \textbf{end for}
\end{algorithm}

\begin{algorithm}[h]
\caption{ Backward substitution}
\textbf{Initialize:}  Given an upper triangular matrix  $U\in \RR^{n\times n}$ and a vector $y\in \RR^{n}$, the following algorithm computes the linear system $Uz = y$.\\
$z(n)= y(n)/U(n,n)$\\
\textbf{for} $i = n-1:1$\\
\hspace{0.5cm} \textbf{for} $j = n:i$ and $(i,j)\in NZ(U)$\\
\hspace{1cm}  $y(i)= y(i)- U(i,j)\cdot z(j)$\\
\hspace{0.5cm} \textbf{end for} \\
\hspace{0.5cm} $z(i) = y(i)/U(i,i)$\\
\textbf{end for}
\end{algorithm}


 To evaluate the complexity of the one-step process to solve a system with the coefficient matrix $M$, the  number of nonzeros in $M$ is considered.
Taking two-dimensional problems as an example, the matrix $M$ can be rewritted in the following formulation:
 \begin{align*}
  & p_{T}(\bx)= \sum_{t=0}^{T}\sum_{k=0}^{T} \widehat{\beta}_{tk} L_t(x)L_k(y),\\
  & M = \sum_{t=0}^{T}\sum_{k=0}^{T} \widehat{\beta}_{tk}
   \Big[ M^{(k)}_{1y}\otimes S^{(t)}_{1x} + S^{(k)}_{1y}\otimes M^{(t)}_{1x} \Big],
 \end{align*}
where
\begin{align*}
 & M^{(k)}_{1y}= \Big[ \big(L_k(y)\phi_j(y), \phi_n(y)  \big) \Big]_{0\le j,n\le N-1},\\
 & S^{(t)}_{1x}= \Big[ \big(L_t(x)\phi'_i(x), \phi'_m(x)  \big) \Big]_{0\le i,m\le N-1},\\
 & S^{(k)}_{1y}= \Big[ \big(L_k(y)\phi'_j(y), \phi'_n(y)  \big) \Big]_{0\le j,n\le N-1},\\
 & M^{(t)}_{1x}= \Big[ \big(L_t(x)\phi_i(x), \phi_m(x)  \big) \Big]_{0\le i,m\le N-1}.
\end{align*}
It follows from Proposition \ref{Matrixproposition} that  each matrix $M^{(T)}_{1y}, S^{(T)}_{1x}, S^{(T)}_{1y}, M^{(T)}_{1x}$ has $\mathcal{O}(T N)$ nonzero elements.
Thus, the number of nonzeros in $M$ is of $\mathcal{O}(T^2N^2)$.
Then it is deduced that the number of nonzeros in $M$ for three-dimensional problems is of $\mathcal{O}(T^3 N^3)$.

From Algorithm 2, the cost of performing the ILU(0) factorization essentially  depends on the number of nonzero elements in $M$, which is of $\mathcal{O}(T^{2d} N^d)$. And the complexity of performing either the forward substitution in Algorithm 3 or the backward substitution in Algorithm 4 is of $\mathcal{O}(T^{d} N^d)$. As a result, the one-step iterative process to approximately solve the preconditioning equation $Mz=r$  costs  $\mathcal{O}(T^{2d} N^d)$  numerical  operations, $d=1,2,3$.


\subsection{Fast matrix-vector multiplications}

The fast transforms of the Legendre expansions provide the possibility for fast matrix-vector multiplications of  vectors by the discrete matrix $A+B$ resulting from the Legendre-Gelerkin method.

Denote $\Lambda=(a,b)^d$ and $P_N^d=(P_N)^d$. Define the interpolation operator $I_N: C(\Lambda)\rightarrow P_N^d(\Lambda)$ such that for any $u\in C(\Lambda)$,
\begin{align*}
  (I_N u)(\bx) = u(\bx),  \quad \bx\in \{ x_0, x_1, \cdots, x_N \}^d,
\end{align*}
where $x_0, x_1, \cdots, x_N$ are  the Legendre-Gauss quadrature nodes mentioned above.

Given the coefficient vector $p$ of $u_N\in X_N^d,$  the matrix-vector multiplicaton of $(A+B)p$ is performed as follows
(with the operation counts of each step in parenthese):

\begin{enumerate}[Step 1.]
 \item  Compute the Legendre coefficients of $\nabla u_N $ and $u_N$ respectively; \big($\mathcal{O}(N^d)$\big)
 \item  Perform the BDLT of $\nabla u_N $ and $u_N$ respectively;  \big($\mathcal{O}(N^d (\log N)^2)$ \big)
 \item  Compute $\beta(\bx)\nabla u_N$ and $\alpha(\bx) u_N$ at the Legendre-Gauss quadrature nodes and then  the FDLT  of  $I_N(\beta(\bx)\nabla u_N)$, $I_N(\alpha(\bx) u_N)$;  \big($\mathcal{O}(N^d (\log N)^2)$ \big)
 \item  Compute the matrix-vector multiplicaton of $(A+B)p$.  \big($\mathcal{O}(N^d)$ \big)
\end{enumerate}

For clarity of presentation, fast  matrix-vector multiplicatons are described in  details.

{\bf{One dimensional case.}} Given $u_N = \sum_{k=0}^{N-2}\widehat{u}_k \phi_k(x)$, the computation
\begin{align*}
 (A \widehat{u})_j = \big( I_N(\beta u'_N), \phi'_j \big),  \qquad j=0,1,\cdots, N-2
\end{align*}
without explicitly forming the matrix $A$ is presented as follows.
\begin{itemize}
\item[1,] Using \eqref{difRecerrence} to determine $\{ \widetilde{u}' \}$ from
  \begin{align*}
   u'_N(x) = \sum_{k=0}^{N-2}\widehat{u}_k\phi'_k(x) = \sum_{k=0}^{N}\widetilde{u}'_k L_k(x);
  \end{align*}
\item[2,] (BDLT) Compute
  \begin{align*}
   u'_N(x_j) = \sum_{k=0}^{N}\widetilde{u}'_k L_k(x_j), \qquad j=0,1,\cdots, N;
  \end{align*}
\item[3,] (FDLT) Determine $\{ \widehat{\beta}_k\}$ from
  \begin{align*}
   I_N (\beta u'_N)(x_j) =\sum_{k=0}^{N} \widehat{\beta}_k L_k(x_j), \qquad j=0,1,\cdots, N;
  \end{align*}
\item[4,] For $j=0,1,\cdots, N-2,$ compute
  \begin{align*}
   (A \widehat{u})_j = \Big( I_N (\beta u'_N), -(2j+3)L_{j+1}(x) \Big)= -2\widehat{\beta}_{j+1}.
  \end{align*}
\end{itemize}

Similarly, the computation
\begin{align*}
 (B \widehat{u})_j = \big( I_N(\alpha u_N), \phi_j \big),  \qquad j=0,1,\cdots, N-2
\end{align*}
without explicitly forming the matrix $B$ is presented as follows.
\begin{itemize}
\item[1,] Determine $\{ \widehat{u}_k^{(1)}\}$ from
    \begin{align*}
     u_N(x) = \sum_{k=0}^{N-2}\widehat{u}_k \phi_k(x)=\sum_{k=0}^{N}\widehat{u}_k^{(1)} L_k(x);
    \end{align*}
 \item[2,](BDLT) Compute
    \begin{align*}
     u_N(x_j) = \sum_{k=0}^{N}\widehat{u}_k^{(1)} L_k(x_j), \qquad j=0,1,\cdots, N;
    \end{align*}
 \item[3,] (FDLT) Determine $\{ \widehat{\alpha}_k\}$ from
     \begin{align*}
      I_N (\alpha u_N)(x_j) =\sum_{k=0}^{N} \widehat{\alpha}_k L_k(x_j), \qquad j=0,1,\cdots, N;
     \end{align*}
 \item[4,] For $j=0,1,\cdots, N-2,$ compute
  \begin{align*}
   (B \widehat{u})_j = \Big( I_N (\alpha u_N), \phi_{j}(x) \Big)
   = \frac{2\widehat{\alpha}_j}{2j+1}- \frac{2\widehat{\alpha}_{j+2}}{2j+5} .
  \end{align*}

\end{itemize}

{\bf{Two dimensional case.}} Given $ u_N= \sum_{k,j=0}^{N-2} \widehat{u}_{kj} \varphi_{k,j}(\bx)$, the calculation
\begin{align*}
  (A \widehat{u})_{kj}  = \big(I_N(\beta\nabla u_N), \nabla\varphi_{k,j}\big),\qquad k,j=0,1,\cdots, N-2
\end{align*}
without explicitly forming the matrix $A$ is presented below.
\begin{itemize}
 \item[1,] Using \eqref{difRecerrence} to determine $\{ \widetilde{u}^{x}_{kj} \}$ and $\{ \widetilde{u}^{y}_{kj} \}$   from
   \begin{align*}
     &\partial_x u_N = \sum_{k=0}^{N-2}\sum_{j=0}^{N-2}\widehat{u}_{kj}\phi'_{k}(x)\phi_j(y)
                    = \sum_{k=0}^{N}\sum_{j=0}^{N} \widetilde{u}^{x}_{kj}L_k(x)L_j(y),\\
     &\partial_y u_N = \sum_{k=0}^{N-2}\sum_{j=0}^{N-2}\widehat{u}_{kj}\phi_k(x)\phi'_{j}(y)
                      = \sum_{k=0}^{N}\sum_{j=0}^{N} \widetilde{u}^y_{kj}L_k(x)L_j(y);
   \end{align*}
\item[2,] (BDLT) For $m,n=0,1,\cdots, N$, compute
 \begin{align*}
   &\partial_x u_N(x_m,y_n)= \sum_{k=0}^{N}\sum_{j=0}^{N} \widetilde{u}^{x}_{kj}L_k(x_m)L_j(y_n),\\
   &\partial_y u_N(x_m,y_n)= \sum_{k=0}^{N}\sum_{j=0}^{N} \widetilde{u}^y_{kj}L_k(x_m)L_j(y_n);
 \end{align*}

\item[3,] (FDLT) Determine $\{ \widehat{\beta}^{x}_{kj}\}$ and $\{ \widehat{\beta}^{y}_{kj}\}$ from
  \begin{align*}
   & I_N (\beta\partial_x u_N)(x_m,y_n)=\sum_{k=0}^{N}\sum_{j=0}^{N} \widehat{\beta}^{x}_{kj}L_k(x_m)L_j(y_n),\\
   & I_N (\beta\partial_y u_N)(x_m,y_n)=\sum_{k=0}^{N}\sum_{j=0}^{N} \widehat{\beta}^{y}_{kj}L_k(x_m)L_j(y_n);
  \end{align*}
  \item[4,] For $k,j=0,1,\cdots, N-2,$ compute
    \begin{align*}
     &(A \widehat{u})_{kj}
      = \Big( I_N(\beta\partial_x u_N), \phi'_k(x)\phi_j(y)\Big)+ \Big( I_N(\beta\partial_y u_N), \phi_k(x)\phi'_j(y)\Big)\\
      &= \Big( I_N(\beta\partial_x u_N), (-2k-3)L_{k+1}(x)\phi_j(y)\Big)\!+\! \Big( I_N(\beta\partial_y u_N), (-2j-3)\phi_k(x)L_{j+1}(y)\Big)\\
      &= \frac{4}{2j+5}\widehat{\beta}^{x}_{k+1,j+2}-\frac{4}{2j+1}\widehat{\beta}^{x}_{k+1,j}
         +\frac{4}{2k+5}\widehat{\beta}^{y}_{k+2,j+1}-\frac{4}{2k+1}\widehat{\beta}^{y}_{k,j+1}.
    \end{align*}
\end{itemize}

 Similarly, the evaluation
\begin{align*}
  (B \widehat{u})_{kj}  = \big( I_N(\alpha u_N), \varphi_{k,j}\big),\qquad k,j=0,1,\cdots, N-2
\end{align*}
without explicitly forming the matrix $B$ is shown below.
\begin{itemize}
 \item[1,] Determine $\{ \widetilde{u}_{kj}^{(1)} \}$ from
   \begin{align*}
      u_N = \sum_{k=0}^{N-2}\sum_{j=0}^{N-2}\widehat{u}_{kj}\phi_{k}(x)\phi_j(y)
                    = \sum_{k=0}^{N}\sum_{j=0}^{N}\widetilde{u}_{kj}^{(1)} L_k(x)L_j(y);
   \end{align*}
 \item[2,](BDLT) Compute
    \begin{align*}
     u_N(x_m,y_m) = \sum_{k=0}^{N}\sum_{j=0}^{N}\widetilde{u}_{kj}^{(1)} L_k(x_m)L_j(y_n), \qquad m,n=0,1,\cdots, N;
    \end{align*}

 \item[3,](FDLT) Determine $\{ \widehat{\alpha}_{kj}\}$ from
     \begin{align*}
      I_N (\alpha u_N)(x_m,y_n) =\sum_{k=0}^{N}\sum_{j=0}^{N} \widehat{\alpha}_{kj} L_k(x_m)L_j(y_n), \qquad m,n=0,1,\cdots, N;
     \end{align*}

 \item[4,]For $k,j=0,1,\cdots, N-2,$ compute
  \begin{align*}
   &(B \widehat{u})_{kj} = \Big( I_N (\alpha u_N), \phi_k(x)\phi_j(y) \Big)\\
   &= \frac{4\widehat{\alpha}_{k,j}}{(2k+1)(2j+1)}- \frac{4\widehat{\alpha}_{k,j+2}}{(2k+1)(2j+5)}
      - \frac{4\widehat{\alpha}_{k+2,j}}{(2k+5)(2j+1)}+\frac{4\widehat{\alpha}_{k+2,j+2}}{(2k+5)(2j+5)}  .
  \end{align*}

\end{itemize}

{\bf{Three dimensional case.}} Given $ u_N= \sum_{k,j,l=0}^{N-2} \widehat{u}_{kjl} \psi_{k,j,l}(\bx)$, the evaluation
\begin{align*}
  (A \widehat{u})_{kjl}  = \big( I_N(\beta\nabla u_N), \nabla\psi_{k,j,l}\big),\qquad k,j,l=0,1,\cdots, N-2
\end{align*}
without explicitly forming the matrix $A$ is presented below.
\begin{itemize}
 \item[1,] Using \eqref{difRecerrence} to determine $\{ \widetilde{u}^{x}_{kjl} \}$, $\{ \widetilde{u}^{y}_{kjl} \}$ and $\{ \widetilde{u}^{z}_{kjl} \}$ from
   \begin{align*}
     &\partial_x u_N = \sum_{k=0}^{N-2}\sum_{j=0}^{N-2}\sum_{l=0}^{N-2}\widehat{u}_{kjl}\phi'_{k}(x)\phi_j(y)\phi_l(z)
            = \sum_{k=0}^{N}\sum_{j=0}^{N}\sum_{l=0}^{N} \widetilde{u}^{x}_{kjl}L_k(x)L_j(y)L_l(z),\\
     &\partial_y u_N = \sum_{k=0}^{N-2}\sum_{j=0}^{N-2}\sum_{l=0}^{N-2}\widehat{u}_{kjl}\phi_k(x)\phi'_{j}(y)\phi_l(z)
             = \sum_{k=0}^{N}\sum_{j=0}^{N}\sum_{l=0}^{N} \widetilde{u}^{y}_{kjl}L_k(x)L_j(y)L_l(z),\\
     &\partial_z u_N = \sum_{k=0}^{N-2}\sum_{j=0}^{N-2}\sum_{l=0}^{N-2}\widehat{u}_{kjl}\phi_k(x)\phi_{j}(y)\phi'_l(z)
             = \sum_{k=0}^{N}\sum_{j=0}^{N}\sum_{l=0}^{N} \widetilde{u}^{z}_{kjl}L_k(x)L_j(y)L_l(z);
   \end{align*}
\item[2,] (BDLT) For $m,n,i=0,1,\cdots, N$, compute
 \begin{align*}
   &\partial_x u_N(x_m,y_n,z_i)= \sum_{k=0}^{N}\sum_{j=0}^{N}\sum_{l=0}^{N} \widetilde{u}^{x}_{kjl} L_k(x_m)L_j(y_n)L_l(z_i),\\
   &\partial_y u_N(x_m,y_n, z_i )=\sum_{k=0}^{N}\sum_{j=0}^{N}\sum_{l=0}^{N} \widetilde{u}^{y}_{kjl} L_k(x_m)L_j(y_n)L_l(z_i),\\
   &\partial_z u_N(x_m,y_n,z_i)= \sum_{k=0}^{N}\sum_{j=0}^{N}\sum_{l=0}^{N} \widetilde{u}^{z}_{kjl} L_k(x_m)L_j(y_n)L_l(z_i);
 \end{align*}

\item[3,] (FDLT) Determine $\{ \widehat{\beta}^{x}_{kjl}\}$, $\{ \widehat{\beta}^{y}_{kjl}\}$ and $\{ \widehat{\beta}^{z}_{kjl}\}$  from
  \begin{align*}
   & I_N (\beta\partial_x u_N)(x_m,y_n,z_i)=\sum_{k=0}^{N}\sum_{j=0}^{N}\sum_{l=0}^{N} \widehat{\beta}^{x}_{kjl}L_k(x_m)L_j(y_n)L_l(z_i),\\
   & I_N (\beta\partial_y u_N)(x_m,y_n,z_i)=\sum_{k=0}^{N}\sum_{j=0}^{N}\sum_{l=0}^{N} \widehat{\beta}^{y}_{kjl}L_k(x_m)L_j(y_n)L_l(z_i),\\
   & I_N (\beta\partial_z u_N)(x_m,y_n,z_i)=\sum_{k=0}^{N}\sum_{j=0}^{N}\sum_{l=0}^{N} \widehat{\beta}^{z}_{kjl}L_k(x_m)L_j(y_n)L_l(z_i);
  \end{align*}
  \item[4,] For $k,j,l=0,1,\cdots, N-2,$ compute
    \begin{align*}
     (A \widehat{u})_{kjl}
     =& \Big( I_N(\beta\partial_x u_N), \phi'_k(x)\phi_j(y)\phi_l(z) \Big)
       + \Big( I_N(\beta\partial_y u_N), \phi_k(x)\phi'_j(y)\phi_l(z)\Big)\\
      & + \Big( I_N(\beta\partial_z u_N), \phi_k(x)\phi_j(y)\phi'_l(z)\Big)\\
      =& -\frac{8}{(2j+1)(2l+1)}\widehat{\beta}^{x}_{k+1,j,l}+\frac{8}{(2j+5)(2l+1)}\widehat{\beta}^{x}_{k+1,j+2,l}\\
       & +\frac{8}{(2j+1)(2l+5)}\widehat{\beta}^{x}_{k+1,j,l+2}-\frac{8}{(2j+5)(2l+5)}\widehat{\beta}^{x}_{k+1,j+2,l+2}\\
       &-\frac{8}{(2k+1)(2l+1)}\widehat{\beta}^{y}_{k,j+1,l}+ \frac{8}{(2k+5)(2l+1)}\widehat{\beta}^{y}_{k+2,j+1,l}\\
        &+\frac{8}{(2k+1)(2l+5)}\widehat{\beta}^{y}_{k,j+1,l+2}-\frac{8}{(2k+5)(2l+5)}\widehat{\beta}^{y}_{k+2,j+1,l+2}\\
        &-\frac{8}{(2k+1)(2j+1)}\widehat{\beta}^{z}_{k,j,l+1}+ \frac{8}{(2k+5)(2j+1)}\widehat{\beta}^{z}_{k+2,j,l+1}\\
        & +\frac{8}{(2k+1)(2j+5)}\widehat{\beta}^{z}_{k,j+2,l+1}-\frac{8}{(2k+5)(2j+5)}\widehat{\beta}^{z}_{k+2,j+2,l+1};
    \end{align*}
\end{itemize}

Similarly, the computation
\begin{align*}
  (B \widehat{u})_{kjl}  = \big(I_N(\alpha u_N), \psi_{k,j,l}\big),\qquad k,j,l=0,1,\cdots, N-2
\end{align*}
without explicitly forming the matrix $B$ is shown below.
\begin{itemize}
 \item[1,] Determine $\{ \widetilde{u}_{kj}^{(1)} \}$ from
   \begin{align*}
      u_N = \sum_{k=0}^{N-2}\sum_{j=0}^{N-2}\sum_{l=0}^{N-2}\widehat{u}_{kjl}\phi_{k}(x)\phi_j(y)\phi_l(z)
                    = \sum_{k=0}^{N}\sum_{j=0}^{N}\sum_{l=0}^{N}\widetilde{u}_{kjl}^{(1)} L_k(x)L_j(y)L_l(z);
   \end{align*}
 \item[2,](BDLT) Compute
    \begin{align*}
     u_N(x_m,y_n,z_i) = \sum_{k=0}^{N}\sum_{j=0}^{N}\sum_{l=0}^{N}\widetilde{u}_{kjl}^{(1)} \phi_k(x_m)\phi_j(y_n)\phi_l(z_i), \qquad m,n,i=0,1,\cdots, N;
    \end{align*}

 \item[3,](FDLT) Determine $\{ \widehat{\alpha}_{kjl}\}$ from
     \begin{align*}
      I_N (\alpha u_N)(x_m,y_n,z_i) =\sum_{k=0}^{N}\sum_{j=0}^{N}\sum_{l=0}^{N} \widehat{\alpha}_{kjl} L_k(x_m)L_j(y_n)L_l(z_i), \quad m,n,i=0,1,\cdots, N;
     \end{align*}

 \item[4,]For $k,j,l=0,1,\cdots, N-2,$ compute
  \begin{align*}
   (B& \widehat{u})_{kjl} = \Big( I_N (\alpha u_N), \phi_k(x)\phi_j(y)\phi_l(z) \Big)\\
   =& -\frac{8\widehat{\alpha}_{k,j,l+2}}{(2k+1)(2j+1)(2l+5)}+\frac{8\widehat{\alpha}_{k,j,l}}{(2k+1)(2j+1)(2l+1)}
     - \frac{8\widehat{\alpha}_{k,j+2,l}}{(2k+1)(2j+5)(2l+1)}\\
    &+\frac{8\widehat{\alpha}_{k,j+2,l+2}}{(2k+1)(2j+5)(2l+5)}-\frac{8\widehat{\alpha}_{k+2,j,l}}{(2k+5)(2j+1)(2l+1)}
     + \frac{8\widehat{\alpha}_{k+2,j+2,l}}{(2k+5)(2j+5)(2l+1)}\\
    & + \frac{8\widehat{\alpha}_{k+2,j,l+2}}{(2k+5)(2j+1)(2l+5)}- \frac{8\widehat{\alpha}_{k+2,j+2,l+2}}{(2k+5)(2j+5)(2l+5)}.
  \end{align*}

\end{itemize}

Note that the main cost in the above procedure of evaluating $A\widehat{u}$ and $B\widehat{u}$ is  the  discrete Legendre transforms in steps 2 and 3. The cost for each of steps 1 and 4 is of $\mathcal{O}(N^d)$ flops.  In summary, the total
cost for evaluating $(A+B)\widehat{u}$ is dominated by several
fast discrete legendre transforms, and is of $\mathcal{O}(N^d(\log N)^2)$.


\section{Numerical results}

In  this section, some numerical  experiments are provided to  demonstrate  the effectiveness of both  the proposed preconditioner $M$   and matrix-vector multiplications. Meanwhile, the properties of matrices from the Legendre-Galerkin methods are numerically studied.
In particular, a class of coefficient functions  with high variations are test.
In all numerical experiments, the stopping criterion $\varepsilon=10^{-12}$.
All the numerical  results  are performed on a 3.30GHz Intel Core i5-4590 desktop computer with 12GB RAM.  The code is in MATLAB 2016b.

\subsection{ Numerical  results for fast matrix-vector multiplications}

The first test investigates the time taken to compute one matrix-vector multiplication of a vector by the discretization matrix resulting from the Legendre-Gelerkin method. The vector is generated randomly by the  rand() command.
 For this purpose,   the numerical experiments are carried out for  different coefficients $\beta(\bx)$ and $\alpha(\bx)$  in one, two and three dimensions.

\begin{table}[htp]
\centering
\caption{CPU time for fast multiplication of matrix $A\in \RR^{(N-1)\times(N-1)}$ by any vector.}
\vspace*{2pt}

\begin{tabular}{|p{1.3cm}<{\centering}|p{1.3cm}<{\centering}|p{1.3cm}<{\centering}|p{1.3cm}<{\centering}|p{1.3cm}<{\centering}|p{1.3cm}<{\centering}|p{1.3cm}<{\centering}|}
 \hline
  \multicolumn{7}{|c|}{  $\beta(x)= ( 2x^2+1)^4$} \\[4pt]
 \hline
  $N$  &  320  & 640 &  1280 & 2560 & 5120  &10240  \\
 \hline
  time(s) &0.0157& 0.0211& 0.0306 &  0.0514 & 0.1130 & 0.2349 \\
 \hline
 \hline
  \multicolumn{7}{|c|}{  $\beta(x)= e^{2x}$} \\[4pt]
 \hline
  $N$  &  320  & 640 & 1280 & 2560 & 5120 &10240  \\
 \hline
  time(s)  &0.0184& 0.0207& 0.0301 & 0.0522 &0.1128  &0.2376 \\
 \hline
\end{tabular}
\label{MVTime1D}
\end{table}

\begin{table}[htp]
\centering
\caption{CPU time for fast multiplication of matrix $B\in \RR^{(N-1)\times(N-1)}$ by any vector.}
\vspace*{2pt}

\begin{tabular}{|p{1.3cm}<{\centering}|p{1.3cm}<{\centering}|p{1.3cm}<{\centering}|p{1.3cm}<{\centering}|p{1.3cm}<{\centering}|p{1.3cm}<{\centering}|p{1.3cm}<{\centering}|}
 \hline
  \multicolumn{7}{|c|}{ $\alpha(x)= ( 2x^2+1)^4$ } \\[4pt]
 \hline
  $N$  &  320  & 640 &  1280 & 2560 & 5120  &10240  \\
 \hline
  time(s) &0.0178& 0.0215& 0.0320 &  0.0529 &0.1140  &0.2414 \\
 \hline
 \hline
  \multicolumn{7}{|c|}{ $\alpha(x)= e^{2x}$ } \\[4pt]
 \hline
  $N$  &  320  & 640 & 1280 & 2560 & 5120 &10240  \\
 \hline
  time(s)  &0.0202& 0.0236& 0.0300 & 0.0518 & 0.1188  & 0.2404 \\
 \hline
\end{tabular}
\label{MVTime1DB}
\end{table}

\begin{table}[htp]
\centering
\caption{CPU time  for fast multiplication of matrix $A\in \RR^{(N\!-\!1)^2\times(N\!-\!1)^2}$ by any vector.}
\vspace*{2pt}

\begin{tabular}{|p{1.3cm}<{\centering}|p{1.3cm}<{\centering}|p{1.3cm}<{\centering}|p{1.3cm}<{\centering}|p{1.3cm}<{\centering}|p{1.3cm}<{\centering}|p{1.3cm}<{\centering}|}
 \hline
  \multicolumn{7}{|c|}{  $\beta(\bx)= \big( 2x^2 + 2y^2 +1\big)^4$ } \\[4pt]
 \hline
  $N$     & 16 & 32 & 64  &128       & 256     & 512 \\
 \hline
  time(s) & 0.0596 & 0.2052  &0.7670 & 2.9913 &12.2643 & 50.4063\\
 \hline
 \hline
  \multicolumn{7}{|c|}{  $\beta(\bx)= e^{2(x+ y)}$ } \\[4pt]
 \hline
  $N$  &  16  & 32 & 64 & 128 & 256 &512  \\
 \hline
  time(s)  &0.0575& 0.2077& 0.7650 & 2.9453 &12.0757  & 49.7519 \\
 \hline
\end{tabular}
\label{MVTime2D}
\end{table}

\begin{table}[htp]
\centering
\caption{CPU time  for fast multiplication of matrix $B\in \RR^{(N\!-\!1)^2\times(N\!-\!1)^2}$ by any vector.}
\vspace*{2pt}

\begin{tabular}{|p{1.3cm}<{\centering}|p{1.3cm}<{\centering}|p{1.3cm}<{\centering}|p{1.3cm}<{\centering}|p{1.3cm}<{\centering}|p{1.3cm}<{\centering}|p{1.3cm}<{\centering}|}
 \hline
  \multicolumn{7}{|c|}{  $\alpha(\bx)= \big( 2x^2 + 2y^2 +1\big)^4$ } \\[4pt]
 \hline
  $N$     & 16 & 32 & 64  &128       & 256     & 512 \\
 \hline
  time(s) & 0.0297 & 0.1046 &0.3723 & 1.5264 &6.2888 &26.0985\\
 \hline
 \hline
  \multicolumn{7}{|c|}{  $\alpha(\bx)= e^{2(x+ y)}$ } \\[4pt]
 \hline
  $N$  &  16  & 32 & 64 & 128 & 256 &512  \\
 \hline
  time(s)  &0.0281 &0.1049  & 0.3794 & 1.4797 &6.0668  &25.1772 \\
 \hline
\end{tabular}
\label{MVTime2DB}
\end{table}

\begin{table}[htp]
\centering
\caption{CPU time  for fast multiplication of matrix $A\in \RR^{(N\!-\!1)^3\times(N\!-\!1)^3}$ by any vector.}
\vspace*{2pt}

\begin{tabular}{|p{1.3cm}<{\centering}|p{1.3cm}<{\centering}|p{1.3cm}<{\centering}|p{1.3cm}<{\centering}|p{1.3cm}<{\centering}|p{1.3cm}<{\centering}|}
 \hline
  \multicolumn{6}{|c|}{  $\beta(\bx)= \big( 2x^2 + 2y^2 + 2z^2 +1\big)^4$ } \\[4pt]
 \hline
  $N$  & 4 &  8 & 16 & 32  & 64 \\
 \hline
  time(s)  &0.0778& 0.3989 & 2.5246 &20.4493  &  166.0483\\
 \hline
 \hline
  \multicolumn{6}{|c|}{  $\beta(\bx)= e^{2(x+y+z)}$ } \\[4pt]
 \hline
  $N$   & 4 & 8 & 16 & 32 &64  \\
 \hline
  time(s) & 0.0770& 0.3982 & 2.2168 &16.3830  &132.7023 \\
 \hline
\end{tabular}
\label{MVTime3D}
\end{table}

\begin{table}[htp]
\centering
\caption{CPU time  for fast multiplication of matrix $B\in \RR^{(N\!-\!1)^3\times(N\!-\!1)^3}$ by any vector.}
\vspace*{2pt}

\begin{tabular}{|p{1.3cm}<{\centering}|p{1.3cm}<{\centering}|p{1.3cm}<{\centering}|p{1.3cm}<{\centering}|p{1.3cm}<{\centering}|p{1.3cm}<{\centering}|}
 \hline
  \multicolumn{6}{|c|}{   $\alpha(x)= \big( 2x^2 + 2y^2 + 2z^2 +1\big)^4$ } \\[4pt]
 \hline
  $N$  & 4 &  8 & 16 & 32  & 64 \\
 \hline
  time(s)  & 0.0251 & 0.1404 & 0.7706 &6.2419  & 50.8091\\
 \hline
 \hline
  \multicolumn{6}{|c|}{  $\alpha(x)= e^{2(x+y+z)}$ } \\[4pt]
 \hline
  $N$   & 4 & 8 & 16 & 32 &64  \\
 \hline
  time(s) & 0.0255& 0.1268 & 0.7127 &5.7871  &47.1070 \\
 \hline
\end{tabular}
\label{MVTime3DB}
\end{table}

The average time of 10 tests of matrix-vector multiplications by matrix $A$ is reported in Table \ref{MVTime1D}, Table \ref{MVTime2D}, Table \ref{MVTime3D}, and by matrix $B$ in Table \ref{MVTime1DB}, Table \ref{MVTime2DB}, Table \ref{MVTime3DB} respectively. It can be observed that the time  scales roughly
linearly in the dimension of  matrices $A$ and $B$, which is consistent with the discussions in section 4.3.

\subsection{ Numerical  results for the number of iterations}

The second test is to demonstrate the effectiveness of  proposed preconditioner $M$.
To this end,  the iteration steps of the PCG method with a constant-coefficient preconditioner (PCG-I) and  the PCG method with the proposed  preconditioner $M$ (PCG-II) are compared. The preconditioner $M$ is constructed by approximating $\beta(\bx)$ and $\alpha(\bx)$ with a ($t_1$+1)-term Legendre series  and a ($t_2$+1)-term Legendre series  in each direction respectively.
In each iteration of PCG-I,
the system with the constant-coefficient preconditioner as the coefficient matrix is solved by direct methods in $\mathcal{O}(N)$ operations for $d=1$ \cite{shen1996} and in $\mathcal{O}(N^d(\log N)^{d-1})$ operations for $d=2,3$\cite{shen1996,Shen1995}.
Numerical results  are presented in Table \ref{Table:Iter1D}, \ref{Table:Iter2D} and \ref{Table:Iter3D}.
Test problems are considered as follows:
\begin{example}
The problem \eqref{NSproblem} in one dimension takes the following coefficients:
\begin{itemize}
 \item[(a)] $\beta(x)=( 2x^2+1)^4$ and $\alpha(x)=\cos(x)$.
 \item[(b)] $\beta(x)=e^{2x}$ and $\alpha(x)=0$.
\end{itemize}
\end{example}

\begin{example}
The coefficients of problem \eqref{NSproblem} in two dimensions are as follows :
\begin{itemize}
\item[(a)] $\beta(\bx)=\big( 2x^2 + 2y^2 +1\big)^4$ and $\alpha(\bx)=\cos(x+y)$.
 \item[(b)] $\beta(\bx)=e^{2(x+y)}$ and $\alpha(\bx)=0$.
\end{itemize}
\end{example}

\begin{example}
The coefficients of problem \eqref{NSproblem} in three dimensions are as follows:
\begin{itemize}
\item[(a)] $\beta(\bx)=\big( 2x^2 + 2y^2 + 2z^2 +1\big)^4$ and $\alpha(\bx)=\cos(x+y+z)$.
 \item[(b)] $\beta(\bx)=e^{2(x+y+z)}$ and $\alpha(\bx)=0$.
\end{itemize}
\end{example}

\begin{table}[htp]
\centering
\caption{Iteration counts for Example\,1.}
\newcommand{\tabincell}[2]{\begin{tabular}{@{}#1@{}}#2\end{tabular}}
\begin{tabular}{|c|c|cccccc|}
  \hline
\multicolumn{8}{|c|}{ Example 1 (a) } \\[4pt]
  \hline
  \multicolumn{2}{|c|}{$N$}
         & $320$ & $640$ & $1280$ & $2560$ & $5120$ & $10240$\\
  \hline
  \multirow{3}{*}[15pt]{PCG-I}
  & $t_1$=0, $t_2$=0 & 130 & 134 & 138 & 142 & 145 & 148\\
    \hline
  \multirow{3}{*}[5pt]{PCG-II}
   & $t_1$=4, $t_2$=2 & 16  & 17   & 17  & 17 &18 & 18\\
    \cline{2-8}
   & $t_1$=6, $t_2$=2 & 7  & 8 & 8 & 8 & 8 &8\\
  \hline
  \hline
  \multicolumn{8}{|c|}{ Example 1 (b) } \\[4pt]
  \hline
  \multicolumn{2}{|c|}{$N$}
         & $320$ & $640$ & $1280$ & $2560$ & $5120$ & $10240$\\
  \hline
  \multirow{3}{*}[15pt]{PCG-I}
  & $t_1$=0, $t_2$=0 & 108& 110 & 113 &116  & 119&121\\
    \hline
  \multirow{3}{*}[5pt]{PCG-II}
   & $t_1$=4, $t_2$=0 & 11  & 11   & 11  & 12 &12 & 12\\
    \cline{2-8}
   & $t_1$=5, $t_2$=0 & 7  & 7 & 7 & 8 & 8 &8\\
  \hline
\end{tabular}
\label{Table:Iter1D}
\end{table}

\begin{table}[htp]
\centering
\caption{Iteration counts for Example\,2.}
\newcommand{\tabincell}[2]{\begin{tabular}{@{}#1@{}}#2\end{tabular}}
\begin{tabular}{|c|c|ccccc|}
  \hline
\multicolumn{7}{|c|}{ Example 2 (a) } \\[4pt]
  \hline
  \multicolumn{2}{|c|}{$N$}
         & 40 & 60 & 80 & 100 & 120 \\
  \hline
  \multirow{3}{*}[15pt]{PCG-I}
  & $t_1$=0, $t_2$=0 & 242 & 281 & 288 & 291 & 292  \\
    \hline
  \multirow{3}{*}[5pt]{PCG-II}
   & $t_1$=4, $t_2$=3 & 15 & 17 & 19 & 21 &23 \\
    \cline{2-7}
   & $t_1$=6, $t_2$=3 & 6 & 7 & 9 & 10 & 10\\
  \hline
  \hline
  \multicolumn{7}{|c|}{ Example 2 (b) } \\[4pt]
  \hline
  \multicolumn{2}{|c|}{$N$}
         & 40 & 60 & 80 & 100 & 120 \\
  \hline
  \multirow{3}{*}[15pt]{PCG-I}
  & $t_1$=0, $t_2$=0 & 545& 585 & 602 &610 &615 \\
    \hline
  \multirow{3}{*}[5pt]{PCG-II}
   & $t_1$=5, $t_2$=0 & 14 & 18 & 22 &26  &30 \\
    \cline{2-7}
   & $t_1$=7, $t_2$=0 & 8 & 11 & 13 &13  &13 \\
  \hline
\end{tabular}
\label{Table:Iter2D}
\end{table}

\begin{table}[htp]
\centering
\caption{Iteration counts for Example\,3.}
\newcommand{\tabincell}[2]{\begin{tabular}{@{}#1@{}}#2\end{tabular}}
\begin{tabular}{|c|c|cccc|}
  \hline
\multicolumn{6}{|c|}{ Example 3 (a) } \\[4pt]
  \hline
  \multicolumn{2}{|c|}{$N$}
         & 12 & 16 & 20 &24  \\
  \hline
  \multirow{3}{*}[15pt]{PCG-I}
  & $t_1$=0, $t_2$=0 & 256 & 366 & 436 & 476   \\
    \hline
  \multirow{3}{*}[5pt]{PCG-II}
   & $t_1$=4, $t_2$=3 & 15 & 19 & 22 & 23  \\
    \cline{2-6}
   & $t_1$=6, $t_2$=3 & 7 & 7 & 8 & 9 \\
  \hline
  \hline
  \multicolumn{6}{|c|}{ Example 3 (b) } \\[4pt]
  \hline
  \multicolumn{2}{|c|}{$N$}
         & 12 & 16 & 20 &24  \\
  \hline
  \multirow{3}{*}[15pt]{PCG-I}
  & $t_1$=0, $t_2$=0 & 1484& 2186 & 2564 & 2744 \\
    \hline
  \multirow{3}{*}[5pt]{PCG-II}
   & $t_1$=5, $t_2$=0 & 7 & 9 & 13 & 16  \\
    \cline{2-6}
   & $t_1$=6, $t_2$=0 & 5 & 8 & 10 & 12  \\
  \hline
\end{tabular}
\label{Table:Iter3D}
\end{table}

Table \ref{Table:Iter1D} reports  the  results for the one-dimensional problem. Note that the PCG method
with the proposed preconditioner $M$ exhibits excellent performance  in terms of iteration step over the PCG method with a constant-coefficient preconditioner. Meanwhile, the iteration steps of  PCG-II   only increase slightly as the discretization parameter $N$ increases.
 As $\beta(\bx)$ and $\alpha(\bx)$ are approximated by a finite number of Legendre series to a higher accuracy, the iteration steps decrease which indicates that the PCG method converges more quickly.
  Table \ref{Table:Iter2D}  and  \ref{Table:Iter3D} list the numerical results for the two- and three-dimensional problems respectively. The iteration steps of PCG-II  for the 2-D and 3-D cases behave similarly as the 1-D case.
All of  the examples show that the proposed  preconditioner $M$ is very effective for problems with large variations in coefficient functions.

\begin{example}
The PCG method with the proposed preconditioner $M$  can be applied to more general second order problems:
\begin{align}
&\begin{cases}\label{problem2}
 -( \beta_1(x)u_x )_x -( \beta_2(y)u_y )_y  + \alpha(\bx)u = f, \quad\bx\in\Omega=(-1,1)^2,\\
 u|_{\partial \Omega}=0,
\end{cases}\\
&\begin{cases}\label{problem3}
 -( \beta_1(x)u_x )_x -( \beta_2(y)u_y )_y -( \beta_3(z)u_z )_z + \alpha(\bx)u = f, \quad\bx\in\Omega=(-1,1)^3,\\
 u|_{\partial \Omega}=0.
\end{cases}
\end{align}

Consider the problem \eqref{problem2} and \eqref{problem3} with the following coefficients:
\begin{itemize}
 \item[(a)] $\beta_1(x)=e^{2x}$,\,$\beta_2(y)=\cos(y)$ and $\alpha(\bx)=0$.
 \item[(b)] $\beta_1(x)=e^{2x}$,\,$\beta_2(y)=\cos(y)$,\,$\beta_3(z)=\cos(z)$ and $\alpha(\bx)=0$.
\end{itemize}
\end{example}

For problems of the form \eqref{NSproblem}, Shen in \cite{shen1996} have pointed out that it is efficient to make a change of dependent variable $v=\sqrt{\beta}u$ \cite{Concus1973} which reduces \eqref{NSproblem} to the following equation:
\begin{align}\label{bianfen}
 \begin{cases}
  -\Delta v +p(\bx)v = q,\quad \bx\in \Omega=[-1,1]^d,\, d=1,2,3,\\
  v|_{\partial \Omega}=0,
 \end{cases}
\end{align}
where $p(\bx)=\frac{\Delta(\sqrt{\beta})}{\sqrt{\beta}}+\frac{\alpha(\bx)}{\beta}$ and $q(\bx)=\frac{f}{\sqrt{\beta}},$
 then the resulting system from the above problem \eqref{bianfen}
  can be solved by using a  preconditioned conjugate gradient  method  with  a constant-coefficient preconditioner. However, this strategy is limited in the situation
such as problem \eqref{problem2} and \eqref{problem3}. In what follows,  both PCG-I and PCG-II are performed  for the linear systems rising from problem \eqref{problem2} and \eqref{problem3}. The preconditioner $M$ is constructed by approximating  $\beta_j$ in case (a) with the $t_{1j}$-term Legendre polynomials, $j$=1,2,   and $\beta_j$ in case (b) with the $t_{1j}$-term Legendre polynomials, $j$=1,2,3.
Numerical results are shown in Table \ref{tabl:Example4a} and Table \ref{tabl:Example4b}.

\begin{table}[htp]
\centering
\caption{Iteration counts for Example\, 4\,(a).}
\newcommand{\tabincell}[2]{\begin{tabular}{@{}#1@{}}#2\end{tabular}}
\begin{tabular}{|c|c|ccccc|}
  \hline
  \multicolumn{2}{|c|}{$N$}
         & 40 & 60 & 80 & 100 & 120 \\
  \hline
  \multirow{3}{*}[15pt]{PCG-I}
  &$t_{11}$=0,\,$t_{12}$=0  & 86 & 89 & 90 & 92 & 92 \\
    \hline
  \multirow{3}{*}[5pt]{PCG-II}
   & $t_{11}$=4,\,$t_{12}$=3  & 10 & 11 &12  & 12 &13\\
    \cline{2-7}
   & $t_{11}$=5,\,$t_{12}$=3  & 8 &10  & 12 & 13 &13 \\
  \hline
\end{tabular}
\label{tabl:Example4a}
\end{table}

\begin{table}[htp]
\centering
\caption{Iteration counts for Example\,4\,(b).}
\newcommand{\tabincell}[2]{\begin{tabular}{@{}#1@{}}#2\end{tabular}}
\begin{tabular}{|c|c|cccc|}
  \hline
  \multicolumn{2}{|c|}{$N$}
         & 12 & 16 & 20 &24  \\
  \hline
  \multirow{3}{*}[15pt]{PCG-I}
  & $t_{11}$=0,\,$t_{12}$=0,\,$t_{13}$=0 &84& 92 &96  & 98 \\
    \hline
  \multirow{3}{*}[5pt]{PCG-II}
   & $t_{11}$=4,\,$t_{12}$=3,\,$t_{13}$=3 & 9 & 10 &10  & 11  \\
    \cline{2-6}
   & $t_{11}$=6,\,$t_{12}$=3,\,$t_{13}$=3 & 6 & 6 & 7 & 8  \\
  \hline
\end{tabular}
\label{tabl:Example4b}
\end{table}

Tables \ref{tabl:Example4a} and \ref{tabl:Example4b} show that  the strategy that using a constant-coefficient problem  precondition variable-coefficient problems is not effective if coefficient functions have large variation over the domain. Further, it indicates  the effectiveness of the  proposed preconditioner $M$.

\subsection{ Numerical ranks of  off-diagonal blocks of matrices in the  Legendre-Galerkin method }
A  direct spectral  method  for differential  equations  with  variable coefficients
in one dimension  was proposed in \cite{SWJ2016}. The strategy therein is based on the rank structures of the matrices in Fourier- and Chebyshev-spectral methods. Numerical ranks of the off-diagonal block $(A)|_{1:\frac{N}{2}, \frac{N}{2}+1:end}$
and  $(B)|_{1:\frac{N}{2}, \frac{N}{2}+1:end}$ with different variable coefficients are computed. Here $A|_{j,k}$ denotes the $(j,k)$ entry of $A$ and can be similarly understood when $j$ and $k$ are replaced by index sets, which is the same as the notation in \cite{SWJ2016}.

\begin{table}[htp]
\centering
\caption{Numerical ranks of the off-diagonal block $(B)|_{1:\frac{N}{2}, \frac{N}{2}+1:end}$ for  $\alpha(x),$ with different sizes $N$ and tolerances $\tau$.}
\newcommand{\tabincell}[2]{\begin{tabular}{@{}#1@{}}#2\end{tabular}}
\scalebox{0.95}{
\begin{tabular}{|l|c|cccccc|}
  \hline
  \multicolumn{2}{|c|}{$N$}
         & $320$ & $640$ & $1280$ & $2560$ & $5120$ & $10240$\\
  \hline
  \multirow{3}{*}[5.5pt]{Numerical rank\big(with $\alpha(x)=\cos(\sin(x))$\big)}
   & $\tau=10^{-6}$ & 6  & 6   & 2  & 2 &2 & 2\\
    \cline{2-8}
   & $\tau=10^{-12}$ & 8  & 8 & 8 & 8 & 8 & 8\\
    \hline
  \multirow{3}{*}[5.5pt]{Numerical rank\big(with $\alpha(x)=e^{x}$\big)}
   & $\tau=10^{-6}$ & 3  & 3   & 3  & 3 &3 & 3 \\
    \cline{2-8}
   & $\tau=10^{-12}$ &5  & 5 & 5 & 5 & 5 &5\\
  \cline{1-8}
  \multirow{3}{*}[5.5pt]{Numerical rank\big(with $\alpha(x)=\frac{1}{100x^2+1}$\big)}
    & $\tau=10^{-6}$ & 2  & 2   & 2  & 2 &2 & 2 \\
    \cline{2-8}
   & $\tau=10^{-12}$ & 2  & 2 & 2 & 2 & 2 & 2\\
  \cline{1-8}
  \hline
\end{tabular}
}
\label{LowRankofmassB}
\end{table}

\begin{table}[htp]
\centering
\caption{Numerical ranks of the off-diagonal block $(A)|_{1:\frac{N}{2}, \frac{N}{2}+1:end}$ for  $\beta(x),$ with different sizes $N$ and tolerances $\tau$.}
\newcommand{\tabincell}[2]{\begin{tabular}{@{}#1@{}}#2\end{tabular}}
\scalebox{0.95}{
\begin{tabular}{|l|c|cccccc|}
  \hline
  \multicolumn{2}{|c|}{$N$}
         & $320$ & $640$ & $1280$ & $2560$ & $5120$ & $10240$\\
  \hline
  \multirow{3}{*}[5.5pt]{Numerical rank\big(with $\beta(x)=\cos(\sin(x))$\big)}
   & $\tau=10^{-6}$ & 6  & 6   & 8  & 8 &8 & 16\\
    \cline{2-8}
   & $\tau=10^{-12}$ & 115  & 291 & 610 & 1260 &2551 & 5116\\
    \hline
  \multirow{3}{*}[5.5pt]{Numerical rank\big(with $\beta(x)=e^{x}$\big)}
   & $\tau=10^{-6}$ & 4  & 4   & 4  & 4 &5 & 20 \\
    \cline{2-8}
   & $\tau=10^{-12}$ & 117  & 292 & 610 & 1260 & 2549 &5116\\
  \cline{1-8}
  \multirow{3}{*}[5.5pt]{Numerical rank\big(with $\beta(x)=\frac{1}{100x^2+1}$\big)}
    & $\tau=10^{-6}$ & 2  & 2   & 2  & 2 &2 & 2 \\
    \cline{2-8}
   & $\tau=10^{-12}$ & 8  & 65 & 374 & 1105 & 2440 &5052\\
  \cline{1-8}
  \hline
\end{tabular}
}
\label{LowRankofstiffA}
\end{table}

Table \ref{LowRankofmassB} indicates  that the off-diagonal numerical ranks of the matrix $B$
do not increase with $N$, indicating the low-rank property for all cases. However,  when $N$ doubles,
the  numerical ranks of $A$ increase  apparently if the accuracy $\tau$ increases from $10^{-6}$ to $10^{-12}$, as is shown in Table \ref{LowRankofstiffA}. Therefore the direct spectral solver based on  the rank structures of the coefficient matrices  is impracticable for the Legendre-Gelerkin method. Besides, the algorithm of constructing an HSS approximation to a dense matrix requires considerable programming effort. Moreover, for two- and three-dimensional problems, a simple HSS structure is generally not practical.


\section{Conclusion}

 An efficient preconditioner $M$ for the  PCG method is proposed for the linear system arising from the Legendre-Galerkin method of second-order elliptic equations.
Since the iteration step of the  PCG method  increase slightly as the discretization parameter $N$ increases,  matrix-vector multiplications can be evaluated in $\mathcal{O}(N^d (\log N)^2)$ operations, and the complexity  of approximately solving the  system with a preconditioner $M$  is of $\mathcal{O}(T^{2d} N^d)$, where the preconditioner $M$ is  constructed by using the ($T$+1)-term Legendre polynomials in each direction to approximate the variable coefficient functions,
the  algorithm admits an $\mathcal{O}(N^d (\log N)^2)$  computational complexity for $d=1,2,3$ while providing spectral accuracy. Furthermore, numerical results indicate that it is very robust.

\section*{Acknowledgements}
We acknowledge the support of  the National Natural
Science Foundation of China (NSFC 11625101 and 11421101).

\end{document}